\newcommand{\too}{\longrightarrow}
\newtheorem{thm}{Theorem}[section]
\newtheorem{cor}[thm]{Corollary}
\newtheorem{prp}[thm]{Proposition}
\theoremstyle{definition}
\newtheorem{dfn}[equation]{Definition}
\numberwithin{equation}{section}
\begin{document}

\title[Abelianization of the $F$-divided fundamental group scheme]{Abelianization of the 
$F$-divided fundamental group scheme}

\author[I. Biswas]{Indranil Biswas}

\address{School of Mathematics, Tata Institute of Fundamental
Research, Homi Bhabha Road, Bombay 400005, India}

\email{indranil@math.tifr.res.in}

\author[J. P. dos Santos]{Jo\~ao Pedro P. dos Santos}

\address{Institut de Math\'ematiques de Jussieu -- Paris Rive Gauche, 4 place Jussieu, 
Case 247, 75252 Paris Cedex 5, France}

\email{joao-pedro.dos-santos@imj-prg.fr}

\subjclass[2010]{14G17, 14F35, 14K30}

\keywords{$F$-divided fundamental group scheme, Frobenius, Picard scheme, Albanese.}

\date{}

\begin{abstract}
Let $(X\, ,x_0)$ be a pointed smooth proper variety defined over an algebraically closed field.
The Albanese morphism for $(X\, ,x_0)$ produces
a homomorphism from the abelianization of the $F$-divided fundamental group
scheme of $X$ to the $F$-divided fundamental group of the Albanese
variety of $X$. We
prove that this homomorphism is surjective with finite kernel. The kernel is also
described.
\end{abstract}

\maketitle

\section{Introduction}

Let $X$ be a proper and smooth variety defined over an algebraically closed field
$k$. Once chosen a $k$-point $x_0$ of $X$, we have the Albanese morphism
$$
\alpha\, :\, X\,\longrightarrow\, A\, ,
$$
as explained in \cite[p.10]{serre1}. Let $\pi_1^\mathrm{et}$ stand for the etale fundamental group as defined on \cite{SGA1} and use the superscript ``ab'' to denote the abelianization.  Since  $\pi_1^{\rm et}(A,\, \alpha(x_0))$
is abelian [SGA1, XI, 2.1, p. 222] the homomorphism of fundamental groups induced by $\alpha$ factors through
the quotient $\pi_1^{\rm et}(X,x_0)\, \longrightarrow\, \pi_1^{\rm et}(X,\,x_0)^\mathrm{ab}$. It is known that
the resulting homomorphism
$$
\alpha_\#\, :\, \pi_1^{\rm et}(X,\,x_0)^\mathrm{ab}\,\longrightarrow\, \pi_1^{\rm et}(A,\, 0)
$$
is surjective with finite kernel; the kernel can also be described (see
\cite[Lemma 5]{katz_lang}). 

Analogous considerations can be made for the essentially finite fundamental group scheme
$\pi_1^{\rm ef}$ \cite{nori1}, since it is known that $\pi_1^{\rm ef}(A,\alpha(x_0))$ is
abelian \cite{nori2}. This work was undertaken by  Antei, who showed that the homomorphism
$\alpha_\#\, :\, \pi_1^{\rm ef}(X,\,x_0)^\mathrm{ab}\,\longrightarrow\, \pi_1^{\rm ef}(A,\, 0)
$ is surjective with finite kernel (see \cite{An}). A. Langer treated the analogous
property in the setting of the $S$-fundamental group scheme \cite{La}.

Our aim here is to address the question for the $F$-divided fundamental group
scheme \cite{ds07} and generalize the analysis made in loc. cit.  More precisely,
we prove the following (see Theorem \ref{13.01.2016--1}):
\begin{itemize}
\item The homomorphism from the abelianized $F$-divided fundamental group scheme of
$X$ to the $F$-divided fundamental group scheme of $A$ (which is
abelian), induced by $\alpha$, is surjective.

\item The kernel of the above surjective homomorphism is finite.
\end{itemize}
We also describe this kernel.

\subsection*{Notation and standard terminology} 

\begin{enumerate}
\item We let $k$ be an algebraically closed field of characteristic $p>0$. 

\item If $Y$ is any $k$-scheme, we let $F\,:\,Y\,\too\, Y$ stand for the \emph{absolute} 
Frobenius morphism: on the underlying topological space it is just the identity and 
$F^\#:\mathcal O_Y\too\mathcal O_Y$ is the usual Frobenius morphism of rings in 
characteristic $p$.

\item The $F$-divided sheaves are the \emph{flat} sheaves of \cite[p. 3,
Definition 1.1]{gieseker}. We adopt the terminology introduced in
\cite[p. 695, Definition 4]{ds07}.

\item The \emph{rank} of an $F$-divided sheaf $\{{\mathcal E}_i\, ,
\sigma_i\}_{i\in\mathbb N}$ is the rank of $\mathcal E_0$.

\item The fundamental group scheme for the $F$-divided sheaves on a pointed smooth 
$k$-scheme $(Y,y_0)$, call it $\Pi(Y,y_0)$, is the one introduced in \cite[p. 696,
Definition 7]{ds07}: the category of $F$-divided sheaves is equivalent, as a Tannakian 
category, to the category of finite dimensional representations of $\Pi(Y,y_0)$.

\item Let $G$ be affine group scheme over $k$. A \emph{quotient} of $G$ is a quotient in 
the sense of Waterhouse \cite[p. 114, 15.1]{waterhouse}. A morphism of affine group schemes $G\to H$ is surjective if $H$ is a quotient of $G$. 

\item Given an affine group scheme $G$ over $k$, we let $G^\mathrm{ab}$, 
$G^\mathrm{uni}$ and $G^{\mathrm{diag}}$ stand respectively for the largest abelian,
largest unipotent, and largest diagonal quotient of $G$.

\item Given any abelian group $\Lambda$, we write $\mathrm{Diag}(\Lambda)$ for the affine group scheme defined in \cite[Part 1, 2.5, p.26]{jantzen}. It is simply the ``torus'' having $\Lambda$ as its group of characters. 

\item If $Y$ is a smooth and proper variety over $k$, its \emph{Picard scheme} will be denoted 
by $\mathbf{Pic}(Y)$. We reserve the symbol $\mathrm{Pic}(Y)$ for the \emph{Picard group}. The connected component of the identity of $\mathbf{Pic}(Y)$ will be denoted by the familiar $\mathbf{Pic}^0(Y)$; we write $\mathrm{Pic}^0(Y)$ for its group of $k$-points. 
At 
present, the best general reference for the Picard scheme seems to be \cite{kleiman}.
\item Finite groups will be identified with their associated finite affine group schemes \cite[2.3]{waterhouse}. The same identifications is extended to profinite groups. 

\item For any abelian group scheme $G$ and any positive integer $m$, we 
write $G[m]$ for the kernel of multiplication by $m$.
 
\item For an abelian variety $B$ over $k$, we let $T_pB$ stand for the 
pro-etale group scheme $\varprojlim B[p^n](k)$. The reader should bear in mind that $T_pB$ is a unipotent group scheme. 
\end{enumerate}

\section{The Albanese variety}

Let $X$ be a proper and smooth variety defined over $k$. In what follows, the underlying reduced subscheme of $\mathbf{Pic}^0(X)$ is denoted by 
$\mathbf{Pic}^0_{\mathrm{red}}(X)$. We fix a closed point $x_0\,\in\, X$. Consider the unique
Poincar\'e line bundle on $X\times\mathbf{Pic}^0(X)$ which is trivial
on $x_0\times \mathbf{Pic}^0(X)$. Restrict the Poincar\'e bundle to
$X\times\mathbf{Pic}^0_{\mathrm{red}}(X)$. Viewing this restriction as a line bundle
on $\mathbf{Pic}^0_{\mathrm{red}}(X)$ parametrized by $X$, we get a morphism
\begin{equation}\label{e1}
\alpha \, :\, X\, \longrightarrow\, 
A\,:=\, \mathbf{Pic}^0(\mathbf{Pic}^0_{\mathrm{red}}(X))\, .
\end{equation}

The \emph{abelian variety} $A$ and the morphism $\alpha$ defined above enjoy a universal 
property characterizing them as the Albanese variety and Albanese morphism in the sense of \cite{Se}.
Since we have no use for this universal property, the reader is only required to bear in 
mind the definition in \eqref{e1}.

\begin{prp}\label{22.07.2015--2}
The homomorphism
\[
\alpha^*\, :\, \mathbf{Pic}^0(A)\,\longrightarrow\, \mathbf{Pic}^0_{\mathrm{red}}(X)
\]
induced by the Albanese morphism $\alpha$ in \eqref{e1} is an isomorphism.
\end{prp}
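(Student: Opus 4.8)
The plan is to reduce the statement to the biduality theorem for abelian varieties. First I would observe that $P := \mathbf{Pic}^0_{\mathrm{red}}(X)$ is an abelian variety: it is a reduced, connected, finite-type group scheme over the algebraically closed (hence perfect) field $k$, so it is smooth, and being proper it is an abelian variety. Consequently $A = \mathbf{Pic}^0(P)$ is precisely the dual abelian variety $\widehat{P}$, and $\mathbf{Pic}^0(A) = \mathbf{Pic}^0(\widehat{P}) = \widehat{\widehat{P}}$ is the double dual. Let $\mathcal{Q}$ denote the normalized Poincaré bundle on $P \times A$. By biduality the map $\kappa : P \to \widehat{\widehat{P}} = \mathbf{Pic}^0(A)$ sending $\xi$ to the class of $\mathcal{Q}|_{\{\xi\} \times A}$ is an isomorphism. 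It therefore suffices to prove that the composite $\alpha^* \circ \kappa : P \to \mathbf{Pic}^0_{\mathrm{red}}(X) = P$ is an isomorphism; in fact I expect it to be the identity.

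Next I would make explicit the defining relation of the Albanese morphism. Let $\mathcal{L}$ be the normalized Poincaré bundle on $X \times P$ (trivial on $\{x_0\} \times P$), and write $\tau$ for the transposition of the two factors of $P \times X$. By the construction of $\alpha$ in \eqref{e1}, for every closed point $x \in X$ the line bundle $\mathcal{L}|_{\{x\} \times P}$ represents $\alpha(x) \in \mathbf{Pic}^0(P) = A$; equivalently, the bundles $(\mathrm{id}_P \times \alpha)^*\mathcal{Q}$ and $\tau^*\mathcal{L}$ on $P\times X$ have isomorphic restrictions to every fibre $P \times \{x\}$. Their difference thus restricts trivially to each $P \times \{x\}$, and the standard normalizations of $\mathcal{Q}$ and $\mathcal{L}$ make it restrict trivially to $\{0\} \times X$ as well; by the seesaw theorem it is globally trivial, so
\[
(\mathrm{id}_P \times \alpha)^*\mathcal{Q} \;\cong\; \tau^*\mathcal{L}.
\]

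Finally I would restrict this identity to a fibre $\{\xi\} \times X$. On the left it yields $\alpha^*\bigl(\mathcal{Q}|_{\{\xi\} \times A}\bigr) = \alpha^*(\kappa(\xi))$, while on the right it yields $\mathcal{L}|_{X \times \{\xi\}}$, which by the definition of the Poincaré bundle on $X \times P$ represents $\xi \in \mathbf{Pic}^0_{\mathrm{red}}(X)$. Hence $\alpha^* \circ \kappa = \mathrm{id}_P$, and since $\kappa$ is an isomorphism, so is $\alpha^*$. I expect the main obstacle to be the bookkeeping of the several Poincaré bundles and their normalizations, in particular checking that the fibrewise agreement together with triviality along a single section really pins down the seesaw isomorphism, and that the identifications $\mathbf{Pic}^0(P) = A$ and $\widehat{\widehat{P}} = \mathbf{Pic}^0(A)$ are compatible with the universal bundles used; once these compatibilities are fixed, the computation is formal.
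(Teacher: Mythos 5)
Your proof is correct and follows essentially the same route as the paper: identify $\mathbf{Pic}^0(A)$ with $\mathbf{Pic}^0_{\mathrm{red}}(X)$ via biduality for abelian varieties, and then verify that under this identification $\alpha^*$ is the identity. The paper dismisses that last verification as ``straightforward''; your seesaw computation with the normalized Poincar\'e bundles is precisely the natural way to carry it out, and it is sound.
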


\begin{proof}
For an abelian variety $B$, we have $\mathbf{Pic}^0(\mathbf{Pic}^0(B))\,=\, B$,
with the identification given by the Poincar\'e line bundle on $B\times\mathbf{Pic}^0(B)$.
More precisely, the classifying morphism $B\,\longrightarrow\,\mathbf{Pic}^0
(\mathbf{Pic}^0(B))$ associated to the above Poincar\'e line bundle is an isomorphism.
In particular, we have $\mathbf{Pic}^0(A)\,=\,\mathbf{Pic}^0_{\mathrm{red}}(X)$.
Now it is straight--forward to check that $\alpha^*$ is the identity morphism
of $\mathbf{Pic}^0_{\mathrm{red}}(X)$.
\end{proof}

\subsection{The group of isomorphism classes of $F$-divided sheaves of rank one}

Define
\begin{equation}\label{22.07.2015--1}
\vartheta(X)\,:=\, \left\{ \begin{array}{c}\text{$F$-divided sheaves } \\
\text{  of rank one on }~X \end{array}\right\}\Bigg/\text{ isomorphisms}
\end{equation}
Under the tensor product of line bundles,  $\vartheta(X)$ is an abelian group. 
Since $X$ is assumed to be proper, the structure of $\vartheta(X)$ is easily
determined from the Picard group of $X$ by means of the following construction \cite[Section 3]{ds07}. 

\begin{dfn}\label{21.07.2015--1} For an abelian group $G$, let $[p]\,:\,G\,\too\,G$
 be the homomorphism $z\,\longmapsto \, p\cdot z$. We define 
$G\langle p\rangle$ as the following projective limit:
\[
\varprojlim\left(\cdots\,\stackrel{[p]}{\longrightarrow}\, G\,
\stackrel{[p]}{\longrightarrow}\, G\,\stackrel{[p]}{\longrightarrow}\, \cdots \right)\, . 
\]
\end{dfn}

For each invertible sheaf $\mathcal L$ of $X$, we write $[\mathcal L]$ for its class in 
the Picard group $\mathrm{Pic}(X)$.

\begin{prp}[{Cf. \cite[p. 7, Theorem 1.8]{gieseker} and \cite[p. 706, Lemma 
20]{ds07}}]\label{21.07.2015--2}
Let $\mathcal L\,=\,({\mathcal L}_0,\, {\mathcal L}_1,\, \cdots)$ be an $F$-divided sheaf of rank one. Write 
\[\tau(\mathcal L)\,=\,
\left([\mathcal L_0],\,[ \mathcal L_1],\,\cdots\right)\,\in
\,\mathrm{Pic}(X)\langle p\rangle\, .
\]
Then, $\tau$ induces an isomorphism between the abelian groups
$\vartheta(X)$ (see \eqref{22.07.2015--1}) and $\mathrm{Pic}(X)\langle p\rangle$ (see
Definition \ref{21.07.2015--1}). 
\end{prp}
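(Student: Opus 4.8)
The plan is to verify that $\tau$ is a well-defined group homomorphism and then establish surjectivity and injectivity separately, the latter being the real content. The one structural fact underlying everything is that for the absolute Frobenius $F$ and any line bundle $\mathcal{M}$ on $X$ there is a canonical isomorphism $F^*\mathcal{M}\cong\mathcal{M}^{\otimes p}$ (compute on transition functions: $F^\#$ raises them to the $p$-th power). Consequently an isomorphism $\sigma_i\colon\mathcal{L}_i\to F^*\mathcal{L}_{i+1}$ forces $[\mathcal{L}_i]=p\cdot[\mathcal{L}_{i+1}]$ in $\mathrm{Pic}(X)$, so $\tau(\mathcal{L})$ indeed lies in the projective limit $\mathrm{Pic}(X)\langle p\rangle$ of Definition \ref{21.07.2015--1}, and isomorphic $F$-divided sheaves visibly have the same image. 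That $\tau$ is a homomorphism is immediate, since the group law on $\vartheta(X)$ is the componentwise tensor product and $[\mathcal{L}_i\otimes\mathcal{M}_i]=[\mathcal{L}_i]+[\mathcal{M}_i]$.

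For surjectivity I would start from a compatible sequence $(\ell_0,\ell_1,\dots)\in\mathrm{Pic}(X)\langle p\rangle$, that is $\ell_i=p\cdot\ell_{i+1}$, and simply choose line bundles $\mathcal{L}_i$ representing $\ell_i$. The relation $[\mathcal{L}_i]=p\cdot[\mathcal{L}_{i+1}]=[F^*\mathcal{L}_{i+1}]$ says precisely that $\mathcal{L}_i$ and $F^*\mathcal{L}_{i+1}$ are isomorphic, so any choice of isomorphisms $\sigma_i$ assembles the $\mathcal{L}_i$ into an $F$-divided sheaf of rank one whose image under $\tau$ is the given sequence. No compatibility among the $\sigma_i$ is required, so this step is essentially formal.

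The heart of the matter is injectivity, for which it suffices, by the homomorphism property, to show that the kernel is trivial. So suppose $\mathcal{L}=(\mathcal{L}_i,\sigma_i)$ has $[\mathcal{L}_i]=0$ for all $i$, and fix trivializations $\mathcal{L}_i\cong\mathcal{O}_X$. Since $F^*\mathcal{O}_X=\mathcal{O}_X$ and $H^0(X,\mathcal{O}_X)=k$ (here $X$ is an integral proper $k$-scheme), each $\sigma_i$ becomes a scalar $c_i\in k^\times$; an isomorphism from $\mathcal{L}$ to the trivial $F$-divided sheaf is then the same as a family $u_i\in k^\times$ satisfying a compatibility of the form $u_i=c_i\,u_{i+1}^{\,p}$, where the exponent $p$ records the effect of $F^*$ on a scalar automorphism. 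The crucial point is that this recursion can always be solved: setting $u_0=1$ and using that $k$ is algebraically closed, so that $p$-th roots exist in $k^\times$, one defines $u_{i+1}$ from $u_i$ inductively, producing the required isomorphism. Hence $\mathcal{L}$ is trivial and $\tau$ is injective.

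The only genuinely delicate step is this last one, and it is delicate only in bookkeeping: one must correctly identify how $F^*$ acts on the scalar transition data (it raises scalars to the $p$-th power), and then observe that solvability of $u_i=c_iu_{i+1}^{\,p}$ is exactly the $p$-divisibility of $k^\times$ guaranteed by $k$ being algebraically closed. Everything else is a formal manipulation of the definitions of $F$-divided sheaf and of $\mathrm{Pic}(X)\langle p\rangle$.
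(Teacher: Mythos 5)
Your proof is correct and follows the same outline as the paper's: the homomorphism property and surjectivity are treated as formal, and injectivity is isolated as the real content. The paper disposes of injectivity by citing Gieseker \cite[p.~6, Proposition 1.7]{gieseker}, and your argument --- trivialize each $\mathcal L_i$, use $H^0(X,\mathcal O_X)=k$ to reduce the structure isomorphisms to scalars $c_i\in k^\times$, and solve the recursion $u_i=c_i\,u_{i+1}^{\,p}$ by extracting $p$-th roots in the perfect field $k$ --- is precisely the content of that citation, so you have simply supplied the details the paper delegates.
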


\begin{proof}
If $\mathcal L$ and $\mathcal M$ are isomorphic $F$-divided sheaves of rank one, then
clearly $\tau(\mathcal L)\,=\,\tau(\mathcal M)$, and hence $\tau$ is indeed a
homomorphism from $\vartheta(X)$ to $\mathrm{Pic}(X)\langle p\rangle$. It is obvious 
that $\tau$ is surjective, while its injectivity is an observation due to Gieseker 
\cite[p. 6, Proposition 1.7]{gieseker}.
\end{proof}

\begin{cor}\label{22.07.2015--3}
Let $\mathrm{NS}(X)$ stand for the N\'eron--Severi group of $X$ and $\mathrm{NS}(X)'$ for
its subgroup of elements whose order if finite and prime to $p$. Then $\vartheta(X)$ sits in a
short exact sequence 
\[
0\,\longrightarrow\,\mathrm{Pic}^0(X)\langle p\rangle\,\longrightarrow\,\vartheta(X)
\,\longrightarrow\,\mathrm{NS}(X)'\,\longrightarrow\, 0\, .
\]
\end{cor}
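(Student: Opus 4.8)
The plan is to transport the problem to the Picard group by means of the isomorphism $\tau\colon \vartheta(X)\xrightarrow{\sim}\mathrm{Pic}(X)\langle p\rangle$ furnished by Proposition \ref{21.07.2015--2}, and then to apply the functor $G\mapsto G\langle p\rangle$ to the defining exact sequence of the N\'eron--Severi group,
\[
0\,\longrightarrow\,\mathrm{Pic}^0(X)\,\longrightarrow\,\mathrm{Pic}(X)\,\longrightarrow\,\mathrm{NS}(X)\,\longrightarrow\,0\, .
\]
Since $G\langle p\rangle$ is, by Definition \ref{21.07.2015--1}, the inverse limit of the system $(G,[p])$ indexed by $\mathbb N$, I would regard the above as a short exact sequence of inverse systems (all with transition map $[p]$) and read off the associated six-term $\varprojlim$--$\varprojlim^1$ exact sequence. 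The left term of this sequence is exactly $\mathrm{Pic}^0(X)\langle p\rangle$, the middle term $\mathrm{Pic}(X)\langle p\rangle$, and the third term $\mathrm{NS}(X)\langle p\rangle$, so after computing these the claim should fall out.

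For the right-hand term I would use that $\mathrm{NS}(X)$ is a finitely generated abelian group, so it suffices to evaluate $\langle p\rangle$ on $\mathbb Z$ and on finite cyclic groups. On $\mathbb Z$ the map $[p]$ is injective with $p^n\mathbb Z$ as its $n$-fold image, whence a compatible sequence $(a_n)$ forces $a_0\in\bigcap_n p^n\mathbb Z=\{0\}$, so $\mathbb Z\langle p\rangle=0$; on a finite $p$-group $[p]$ is nilpotent, so that part dies as well; and on a finite group of order prime to $p$ the map $[p]$ is an isomorphism, so that part is preserved unchanged. Splitting $\mathrm{NS}(X)$ into its free part, its $p$-primary torsion, and its prime-to-$p$ torsion then yields $\mathrm{NS}(X)\langle p\rangle=\mathrm{NS}(X)'$.

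The one point requiring genuine care is the surjectivity onto $\mathrm{NS}(X)'$, which is governed by the connecting map of the six-term sequence; this map lands in $\varprojlim^1$ of the system $(\mathrm{Pic}^0(X),[p])$. The key observation is that $\mathrm{Pic}^0(X)$ is the group of $k$-points of the abelian variety $\mathbf{Pic}^0_{\mathrm{red}}(X)$, on which multiplication by $p$ is an isogeny; since $k$ is algebraically closed, $[p]$ is therefore surjective on $k$-points. Consequently the inverse system $(\mathrm{Pic}^0(X),[p])$ has surjective transition maps, satisfies the Mittag-Leffler condition, and has $\varprojlim^1=0$. The connecting map is then forced to vanish and the six-term sequence collapses to
\[
0\,\longrightarrow\,\mathrm{Pic}^0(X)\langle p\rangle\,\longrightarrow\,\mathrm{Pic}(X)\langle p\rangle\,\longrightarrow\,\mathrm{NS}(X)'\,\longrightarrow\,0\, .
\]
Composing the middle term with $\tau$ then gives the asserted sequence. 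I expect the surjectivity on the right, i.e.\ the vanishing of $\varprojlim^1$, to be the main obstacle; the remaining steps are the routine bookkeeping of inverse limits of finitely generated groups.
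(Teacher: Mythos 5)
Your proof is correct and takes essentially the same route as the paper: transport via $\tau$, apply $\varprojlim$ to $0\to\mathrm{Pic}^0(X)\to\mathrm{Pic}(X)\to\mathrm{NS}(X)\to 0$, get right-exactness from the surjectivity of $[p]$ on $\mathrm{Pic}^0(X)$ (the paper cites Weibel's $\varprojlim^1$ criterion and Mumford for the isogeny fact, where you spell out the Mittag--Leffler vanishing and the $k$-points argument yourself), and identify $\mathrm{NS}(X)\langle p\rangle$ with $\mathrm{NS}(X)'$ via the structure theorem for finitely generated abelian groups. The only divergence is that you unpack the two citations into explicit arguments, which is harmless.
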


\begin{proof}By definition, we have a short exact sequence of abelian groups 
\[
0\,\too\,\mathrm{Pic}^0(X)\,\too\, \mathrm{Pic}(X)\,\too\,\mathrm{NS}(X)\,\too\, 0
\]
which gives rise to an exact sequence of the projective systems considered in
Definition \ref{21.07.2015--1}. Applying the projective limit functor and using that
$$[p]\,:\,\mathrm{Pic}^0(X)\,\too\,\mathrm{Pic}^0(X)$$
is surjective \cite[p.59]{mav}, we conclude that the sequence 
\[
0\,\longrightarrow\,\mathrm{Pic}^0(X)\langle p\rangle\,\longrightarrow\,
\mathrm{Pic}(X)\langle p\rangle\,\longrightarrow\,\mathrm{NS}(X)\langle p\rangle
\,\longrightarrow\, 0
\]
is exact \cite[3.5]{weibel}. As $\mathrm{NS}(X)$ is finitely generated \cite[Exp. 13, Theorem 5.1]{SGA6} and the functor $G\mapsto G\langle p\rangle$ annihilates finitely generated free abelian groups,  the
corollary follows from Proposition \ref{21.07.2015--2} and the easily verified
isomorphisms $\mathrm{NS}(X)'\,\simeq\, \mathrm{NS}(X)'\langle p\rangle\,\simeq\,\mathrm{NS}(X)\langle p\rangle$. 
\end{proof}

\begin{prp}\label{20.07.2015--2}The natural homomorphism
\[
\alpha^*\,:\,\vartheta(A)\,\longrightarrow\, \vartheta(X)
\]
is injective, and its cokernel is the group $\mathrm{NS}(X)'$ introduced in Corollary \ref{22.07.2015--3}. In particular, the kernel of 
\[
\alpha_\#\,:\,\Pi(X,x_0)^{\mathrm{diag}}\,\longrightarrow\,\Pi(A,\alpha(x_0))^\mathrm{diag}
\]
is isomorphic to $\mathrm{Diag}(\mathrm{NS}(X)')$.
\end{prp}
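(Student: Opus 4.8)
The plan is to first establish the stated properties of $\alpha^*$ at the level of the groups $\vartheta$, and then to transport them to the diagonalizable quotients by means of Tannakian duality. The whole argument thus splits into a purely group-theoretic part and a formal Tannakian part.

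First I would record that, since $A$ is an abelian variety, its N\'eron--Severi group $\mathrm{NS}(A)$ is torsion--free; consequently $\mathrm{NS}(A)'\,=\,0$, and Corollary \ref{22.07.2015--3} applied to $A$ yields an isomorphism $\vartheta(A)\,\simeq\,\mathrm{Pic}^0(A)\langle p\rangle$. Next, Proposition \ref{22.07.2015--2} identifies $\alpha^*\,:\,\mathbf{Pic}^0(A)\,\too\,\mathbf{Pic}^0_{\mathrm{red}}(X)$ as an isomorphism; passing to $k$-points (which are unaffected by taking the reduced subscheme, as $k$ is algebraically closed) gives an isomorphism $\alpha^*\,:\,\mathrm{Pic}^0(A)\,\too\,\mathrm{Pic}^0(X)$. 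Since $G\mapsto G\langle p\rangle$ is a functor, I obtain an isomorphism $\mathrm{Pic}^0(A)\langle p\rangle\,\too\,\mathrm{Pic}^0(X)\langle p\rangle$. Comparing the short exact sequences of Corollary \ref{22.07.2015--3} for $A$ and for $X$ in a commutative ladder, the map $\alpha^*\,:\,\vartheta(A)\,\too\,\vartheta(X)$ is then seen to coincide with the composite of the isomorphism $\vartheta(A)\,\simeq\,\mathrm{Pic}^0(A)\langle p\rangle\,\simeq\,\mathrm{Pic}^0(X)\langle p\rangle$ with the inclusion $\mathrm{Pic}^0(X)\langle p\rangle\,\hookrightarrow\,\vartheta(X)$. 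Hence $\alpha^*$ is injective and its cokernel is $\mathrm{NS}(X)'$, which is the first assertion.

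For the second assertion I would invoke the Tannakian description of the diagonal quotient. Rank-one $F$-divided sheaves on $X$ are precisely the one-dimensional representations of $\Pi(X,x_0)$, that is, the characters $\Pi(X,x_0)\,\too\,\mathbb G_m$; these factor through $\Pi(X,x_0)^{\mathrm{diag}}$, and, under tensor product, they constitute the character group of the largest diagonal quotient. This furnishes a canonical identification $\Pi(X,x_0)^{\mathrm{diag}}\,=\,\mathrm{Diag}(\vartheta(X))$ (and likewise for $A$), under which $\alpha_\#$ corresponds contravariantly to $\alpha^*$. Applying the exact contravariant functor $\mathrm{Diag}(-)$ to the short exact sequence $0\,\too\,\vartheta(A)\,\too\,\vartheta(X)\,\too\,\mathrm{NS}(X)'\,\too\,0$ established above turns the injection with cokernel $\mathrm{NS}(X)'$ into a surjection $\alpha_\#$ with kernel $\mathrm{Diag}(\mathrm{NS}(X)')$, which is exactly the claim.

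The main obstacle I anticipate is making the Tannakian identification $\Pi(X,x_0)^{\mathrm{diag}}\,=\,\mathrm{Diag}(\vartheta(X))$ fully precise: one must check that the tensor structure on $\vartheta(X)$ matches the group law on characters, and that the homomorphism induced by $\alpha$ on fundamental group schemes is genuinely dual to pullback of rank-one sheaves along $\alpha$. The remaining steps -- the torsion-freeness of $\mathrm{NS}(A)$, the diagram chase, and the exactness of $\mathrm{Diag}(-)$ -- are routine once this dictionary is in place.
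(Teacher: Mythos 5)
Your proposal is correct and takes essentially the same route as the paper: the paper's proof is a commutative-ladder comparison of the short exact sequences from Corollary \ref{22.07.2015--3} for $X$ and $A$, using Proposition \ref{22.07.2015--2} and the torsion-freeness of $\mathrm{NS}(A)$ (cited from Mumford) to identify $\vartheta(A)\simeq\mathrm{Pic}^0(A)\langle p\rangle\simeq\mathrm{Pic}^0(X)\langle p\rangle$, followed by exactness of the functor $\mathrm{Diag}(-)$. The only difference is that you make explicit the Tannakian identification $\Pi(X,x_0)^{\mathrm{diag}}\,=\,\mathrm{Diag}(\vartheta(X))$ and the duality between $\alpha_\#$ and $\alpha^*$, which the paper leaves implicit.
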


\begin{proof} We have a commutative diagram 
\[
\xymatrix{\vartheta(A)\ar[r]^{\alpha^*}\ar[d]_{\sigma} & \vartheta(X) \ar[d]^{\tau} \\ \mathrm{Pic}(A)\langle p\rangle\ar[r] & \mathrm{Pic}(X)\langle p\rangle \\ \ar[u]^j\mathrm{Pic}^0(A)\langle p\rangle \ar[r] & \ar[u]_i\mathrm{Pic}^0(X)\langle p\rangle.  }
\]
{}From Proposition \ref{21.07.2015--2} we know that $\sigma$ and $\tau$ are isomorphisms. 
{}From Corollary \ref{22.07.2015--3} and \cite[Corollary 2, Ch. IV, Section 19, p. 
165]{mav}, we know that $j$ is an isomorphism. The same Corollary \ref{22.07.2015--3} 
also guarantees that $i$ is injective with co-kernel $\mathrm{NS}(X)'$. Since, by 
Proposition \ref{22.07.2015--2}, the homomorphism $\mathrm{Pic}^0(A)\langle 
p\rangle\,\too\,\mathrm{Pic}^0(X)\langle p\rangle$ is an isomorphism, we are done. The 
statement concerning  group schemes follows easily from the fact that the functor 
$\mathrm{Diag}$ takes exact sequences of abelian groups to exact sequences of affine 
abelian group schemes.
\end{proof}

\section{The group $\pi^{\rm et}(X,x_0)^{\rm ab}$}
Since $\Pi(X,x_0)^{\rm uni}$ is proetale \cite[Corollary 16, p.704]{ds07}, the following considerations are in order. 

\begin{prp}[{Cf.  \cite[p. 308, Lemma 5]{katz_lang}}]\label{20.07.2015--1}
The homomorphism 
\[
\pi_1^{\rm et}(X,\,x_0)^{\mathrm{ab},\mathrm{uni}}\,\longrightarrow \,
\pi_1^{\rm et}(A,\, \alpha(x_0))^{\mathrm{uni}}\,=\,T_p(A)
\]
is surjective and its kernel $K$ is the group of $k$-points of the Cartier dual of the local affine group scheme 
\[
\mathbf{Pic}^{0}(X)/\mathbf{Pic}^0_{\mathrm{red}}(X).
\]
\end{prp}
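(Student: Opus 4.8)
The plan is to reduce the statement to a computation on the Picard scheme by means of Pontryagin duality, and then to feed in Proposition \ref{22.07.2015--2}. Since a finite \'etale $k$-group scheme is unipotent exactly when it is a $p$-group, the quotient $\pi_1^{\mathrm{et}}(X,x_0)^{\mathrm{ab},\mathrm{uni}}$ is the maximal pro-$p$ quotient $P_X$ of the abelianized \'etale fundamental group, and the right-hand side is $T_pA$ as indicated. For pro-$p$ abelian groups a continuous homomorphism is surjective if and only if the induced map on Pontryagin duals is injective, and the kernel of the former is the Pontryagin dual of the cokernel of the latter. Because $\mathrm{Hom}(P_X,\mathbb{Z}/p^n)=H^1_{\mathrm{et}}(X,\mathbb{Z}/p^n)$, I would therefore reduce the whole proposition to analysing, compatibly in $n$, the pullback $\alpha^*\colon H^1_{\mathrm{et}}(A,\mathbb{Z}/p^n)\to H^1_{\mathrm{et}}(X,\mathbb{Z}/p^n)$: surjectivity of $\alpha_\#$ becomes injectivity of $\alpha^*$, and $K$ becomes the Pontryagin dual of $\varinjlim_n\mathrm{coker}(\alpha^*)$.

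The crucial input, and the step I expect to be the main obstacle, is a natural isomorphism
\[
H^1_{\mathrm{et}}(X,\mathbb{Z}/p^n)\;\cong\;\mathrm{Hom}\bigl(\mu_{p^n},\,\mathbf{Pic}^0(X)\bigr)
\]
(Hom taken in commutative $k$-group schemes), and likewise for $A$, which converts the topological side into the geometry of $\mathbf{Pic}$. This rests on the Cartier duality between $\mathbb{Z}/p^n$ and $\mu_{p^n}$: a homomorphism $\mu_{p^n}\to\mathbf{Pic}^0(X)$ is a group-like line bundle on $X\times\mu_{p^n}$, whose isotypical decomposition along the characters of $\mu_{p^n}$ furnishes an \'etale $\mathbb{Z}/p^n$-torsor over $X$. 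That the N\'eron--Severi part contributes nothing is automatic, since $\mu_{p^n}$ is connected while $\mathbf{Pic}(X)/\mathbf{Pic}^0(X)$ is \'etale, so homomorphisms land in $\mathbf{Pic}^0$. The delicate feature, which is exactly the source of the kernel, is that $\mu_{p^n}$ is infinitesimal, so such homomorphisms detect the non-reduced, multiplicative-type directions of $\mathbf{Pic}^0(X)$ and not merely its $k$-points. I would establish the isomorphism functorially in the variety, so that $\alpha^*$ on cohomology corresponds to $\mathrm{Hom}(\mu_{p^n},-)$ applied to the pullback of line bundles $\alpha^*\colon\mathbf{Pic}^0(A)\to\mathbf{Pic}^0(X)$.

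With this in hand the computation is driven by the exact sequence $0\to\mathbf{Pic}^0_{\mathrm{red}}(X)\to\mathbf{Pic}^0(X)\to N\to 0$, where $N=\mathbf{Pic}^0(X)/\mathbf{Pic}^0_{\mathrm{red}}(X)$ is finite and local. By Proposition \ref{22.07.2015--2} the map $\alpha^*\colon\mathbf{Pic}^0(A)\to\mathbf{Pic}^0(X)$ is the composite of an isomorphism onto $\mathbf{Pic}^0_{\mathrm{red}}(X)$ with the inclusion into $\mathbf{Pic}^0(X)$; hence, after the identification above, $\alpha^*$ on cohomology is the map induced by $\mathbf{Pic}^0_{\mathrm{red}}(X)\hookrightarrow\mathbf{Pic}^0(X)$. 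Applying the left-exact functor $\mathrm{Hom}(\mu_{p^n},-)$ to the displayed sequence immediately gives injectivity, whence $\alpha_\#$ is surjective, and exhibits $\mathrm{coker}(\alpha^*)$ as a subgroup of $\mathrm{Hom}(\mu_{p^n},N)$. By Cartier duality $\mathrm{Hom}(\mu_{p^n},N)=\mathrm{Hom}(N^\vee,\mathbb{Z}/p^n)$, which for $n$ larger than the exponent of $N$ is the Pontryagin dual of the finite group $N^\vee(k)$.

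It remains to check the one further technical point, namely that $\mathrm{coker}(\alpha^*)$ fills out all of $\mathrm{Hom}(\mu_{p^n},N)$ in the limit, i.e. that the connecting homomorphism $\mathrm{Hom}(\mu_{p^n},N)\to\mathrm{Ext}^1(\mu_{p^n},\mathbf{Pic}^0_{\mathrm{red}}(X))$ vanishes on the relevant part. Here I expect the argument to use that $\mathbf{Pic}^0_{\mathrm{red}}(X)$ is a divisible abelian variety and that the multiplicative-type part of the extension splits off, so that every homomorphism $\mu_{p^n}\to N$ lifts to $\mathbf{Pic}^0(X)$. Granting this, $\varinjlim_n\mathrm{coker}(\alpha^*)$ is the Pontryagin dual of $N^\vee(k)$, and dualizing once more yields $K\cong N^\vee(k)$, the group of $k$-points of the Cartier dual of $\mathbf{Pic}^0(X)/\mathbf{Pic}^0_{\mathrm{red}}(X)$, as asserted.
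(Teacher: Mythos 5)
Your proposal is correct and follows essentially the same route as the paper's own proof: Pontryagin-dualize, identify $\mathrm{Hom}\left(\pi_1^{\mathrm{et}}(X,x_0)^{\mathrm{uni}},\mathbb Z/p^n\right)\simeq H^1_{\mathrm{et}}(X,\mathbb Z/p^n)\simeq \mathrm{Hom}\left(\mu_{p^n},\mathbf{Pic}^0(X)\right)$, feed in the exact sequence $0\to\mathbf{Pic}^0_{\mathrm{red}}(X)\to\mathbf{Pic}^0(X)\to Q\to 0$, and finish with Cartier duality. The two steps you flag as needing proof are precisely the paper's two citations: the identification of $H^1_{\mathrm{et}}(X,\mathbb Z/p^n)$ with $\mathrm{Hom}\left(\mu_{p^n},\mathbf{Pic}(X)\right)$ is \cite[p. 131, Corollary 4.18]{milne}, and the lifting of homomorphisms $\mu_{p^n}\to Q$ (i.e.\ the vanishing of the obstruction in $\mathrm{Ext}^1\left(\mu_{p^n},\mathbf{Pic}^0_{\mathrm{red}}(X)\right)$) is the Ext-vanishing remark of \cite[Remark, p.310]{katz_lang}.
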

\begin{proof}This can be easily extracted from \cite[p. 308, Lemma 5]{katz_lang}, but we give
the details for the convenience of the reader. 

We will consider the category of finite groups as a full subcategory of the category of finite group schemes over $k$. In the same spirit, the category of pro-finite groups is regarded as a subcategory of the category of pro-etale group schemes over $k$. 
We know that 
\[
\mathrm{Hom}\left(\pi_1^{\mathrm{et}}(X,x_0)^\mathrm{uni},\, \mathbb Z/p^n\mathbb Z\right)
\,\simeq\, H^1_{\mathrm{et}}(X,\, \mathbb Z/p^n\mathbb Z)\, ,
\]
and the latter is, due to \cite[p. 131, Corollary 4.18]{milne}, isomorphic to 
\[
\mathrm{Hom}\left(\mu_{p^n},\, \mathbf{Pic}(X)\right)\,=\, \mathrm{Hom}\left(\mu_{p^n},
\, \mathbf{Pic}^0(X)\right)\, .
\]
Let $Q$ stand for the quotient $\mathbf{Pic}^{0}(X)/\mathbf{Pic}_{\mathrm{red}}^{0}(X)$. Then, 
using that $\mathrm{Ext}^1(B,\,\mu_m)\,=\,0$ for any abelian variety
$B$ \cite[Remark, p.310]{katz_lang}, we arrive at the
exact sequence 
\[
0\,\too\,\mathrm{Hom}\left(\mu_{p^n},\,\mathbf{Pic}_{\mathrm{red}}^0(X)\right)\,\too\,
\mathrm{Hom}\left(\mu_{p^n},\,\mathbf{Pic}^0(X)\right)\,\too\,
\mathrm{Hom}\left(\mu_{p^n},\, Q\right)\,\too\, 0\, .
\]
Since 
\[
\begin{split}
\mathrm{Hom}\left(\mu_{p^n},\,\mathbf{Pic}^0_{\mathrm{red}}(X)\right)&\,=\,
\mathrm{Hom}\left(\mu_{p^n},\,\mathbf{Pic}^0_{\mathrm{red}}(X)[p^n]\right)\\&\simeq\,
 \mathrm{Hom}\left(\left(\mathbf{Pic}^0_{\mathrm{red}}(X)[p^n]\right)^\vee,\, \mathbb Z/{p^n}\mathbb Z\right)\\
&\simeq \, \mathrm{Hom}\left(A[p^n],\,\mathbb Z/{p^n}\mathbb Z\right)
\end{split}
\]
(see \cite[2.4]{waterhouse} for the first and  \cite[Ch. 15, p. 134, Theorem 1]{mav} for the  second isomorphism), it follows that 
\[\begin{split}
\mathrm{Hom}\left(\mu_{p^n},\,\mathbf{Pic}^0_{\mathrm{red}}(X)\right)&\,\simeq\,
\mathrm{Hom}\left(A[p^n],\,\mathbb Z/{p^n}\mathbb Z\right)\\ &
\,\simeq\, \mathrm{Hom}\left(T_pA,\, \mathbb Z/p^n\mathbb Z\right)
\end{split}
\]
thus completing the proof.
\end{proof}

\section{The main result}
Since the $F$-divided fundamental group $\Pi(A,\alpha(x_0))$ of the Albanese variety $A$ is abelian \cite[p. 707, Theorem 21]{ds07}, we have a commutative diagram 
\[
\xymatrix{\Pi(X,x_0) \ar[r] \ar[d]& \Pi(A,\alpha(x_0))\\ \Pi(X,x_0)^{\rm ab}\ar[ru]_{\alpha_\#}.}
\]

\begin{thm}\label{13.01.2016--1}
Let $\mathrm{NS}(X)'$ be as in Corollary \ref{22.07.2015--3} and $K$ as in Proposition
\ref{20.07.2015--1}. The homomorphism
\[
\alpha_\#\,:\,\Pi(X,\,x_0)^{\rm ab}\,\longrightarrow\, \Pi(A,\,\alpha(x_0))
\]
is surjective, and its kernel is 
\[
\mathrm{Diag}(\mathrm{NS}(X)') \times K.
\]
\end{thm}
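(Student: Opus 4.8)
The plan is to reduce the statement to the two propositions already established, by exploiting the canonical decomposition of a commutative affine group scheme over the perfect field $k$ into its parts of multiplicative and unipotent type.

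First I would record the structural input. For any affine group scheme $G$ over $k$, the abelianization $G^{\mathrm{ab}}$ is a commutative affine group scheme, and over a perfect field such a group scheme splits canonically as a direct product
\[
G^{\mathrm{ab}}\,\cong\, (G^{\mathrm{ab}})^{\mathrm{diag}}\times (G^{\mathrm{ab}})^{\mathrm{uni}}
\]
of a group of multiplicative type (here diagonalizable, since $k$ is algebraically closed) and a unipotent group; moreover $(G^{\mathrm{ab}})^{\mathrm{diag}}\,=\,G^{\mathrm{diag}}$, because a diagonalizable group is already abelian, so every homomorphism from $G$ to a diagonalizable group factors through $G^{\mathrm{ab}}$. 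Applying this to $G=\Pi(X,x_0)$ and to $G=\Pi(A,\alpha(x_0))$ (the latter already abelian by \cite[p. 707, Theorem 21]{ds07}), I obtain compatible decompositions of the source and target of $\alpha_\#$. The key formal fact I would then use is that there are no non-trivial homomorphisms between a diagonalizable and a unipotent group scheme over $k$; consequently $\alpha_\#$ is \emph{block-diagonal} with respect to these decompositions, i.e.
\[
\alpha_\#\,=\,\alpha_\#^{\mathrm{diag}}\times \alpha_\#^{\mathrm{uni}},
\]
with $\alpha_\#^{\mathrm{diag}}\,:\,\Pi(X,x_0)^{\mathrm{diag}}\,\too\,\Pi(A,\alpha(x_0))^{\mathrm{diag}}$ and $\alpha_\#^{\mathrm{uni}}\,:\,\Pi(X,x_0)^{\mathrm{ab},\mathrm{uni}}\,\too\,\Pi(A,\alpha(x_0))^{\mathrm{uni}}$. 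In particular $\alpha_\#$ is surjective if and only if both factors are, and its kernel is the product of the two kernels.

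Next I would treat each factor. The diagonal factor is exactly the content of Proposition \ref{20.07.2015--2}, where $\alpha_\#^{\mathrm{diag}}$ is shown to be surjective with kernel $\mathrm{Diag}(\mathrm{NS}(X)')$. For the unipotent factor I would first identify the groups at stake. Since $\Pi(X,x_0)^{\mathrm{uni}}$ is pro-étale \cite[Corollary 16, p.704]{ds07}, it is the maximal unipotent quotient of the maximal pro-finite (étale) quotient of $\Pi(X,x_0)$, namely $\pi_1^{\mathrm{et}}(X,x_0)$; passing to abelianizations yields $\Pi(X,x_0)^{\mathrm{ab},\mathrm{uni}}\,=\,\pi_1^{\mathrm{et}}(X,x_0)^{\mathrm{ab},\mathrm{uni}}$, and likewise $\Pi(A,\alpha(x_0))^{\mathrm{uni}}\,=\,\pi_1^{\mathrm{et}}(A,\alpha(x_0))^{\mathrm{uni}}\,=\,T_pA$. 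Under these identifications $\alpha_\#^{\mathrm{uni}}$ is precisely the homomorphism of Proposition \ref{20.07.2015--1}, which is surjective with kernel $K$.

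Finally I would assemble the two factors: surjectivity of $\alpha_\#$ follows from the surjectivity of each factor, and
\[
\ker(\alpha_\#)\,=\,\ker(\alpha_\#^{\mathrm{diag}})\times\ker(\alpha_\#^{\mathrm{uni}})\,=\,\mathrm{Diag}(\mathrm{NS}(X)')\times K,
\]
as claimed. All the genuinely non-formal input resides in the two propositions already proved, so the remaining work is essentially bookkeeping; I expect the main obstacle to be the clean justification of the unipotent identification $\Pi(X,x_0)^{\mathrm{ab},\mathrm{uni}}=\pi_1^{\mathrm{et}}(X,x_0)^{\mathrm{ab},\mathrm{uni}}$ — that is, verifying that passing to the maximal pro-étale quotient commutes with forming the abelian unipotent quotient — together with checking that the splitting $\alpha_\#=\alpha_\#^{\mathrm{diag}}\times\alpha_\#^{\mathrm{uni}}$ is genuinely compatible with the maps occurring in Propositions \ref{20.07.2015--2} and \ref{20.07.2015--1}.
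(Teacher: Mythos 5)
Your proposal is correct and follows essentially the same route as the paper: both split the abelianized group scheme into its diagonal and unipotent factors (Waterhouse's structure theorem over a perfect field), use the absence of nontrivial homomorphisms between diagonalizable and unipotent groups to see that $\alpha_\#$ is block-diagonal, and then invoke Proposition \ref{20.07.2015--2} for the diagonal factor and the pro-\'etale identification $\Pi(X,x_0)^{\mathrm{ab},\mathrm{uni}}\simeq\pi_1^{\mathrm{et}}(X,x_0)^{\mathrm{ab},\mathrm{uni}}$ together with Proposition \ref{20.07.2015--1} for the unipotent one. The two points you flag as potential obstacles are handled in the paper exactly as you suggest, by citing \cite[p. 704, Corollary 16]{ds07} and the rigidity of the diagonal/unipotent dichotomy, so no further work is needed.
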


\begin{proof}
We can write  $\Pi(X,\,x_0)^\mathrm{ab}$  as a product of $U\times \Delta$,
where $U$ is unipotent and $\Delta$ is diagonal \cite[9.5, p. 70, Theorem]{waterhouse}. By
definition $\Pi(X,\, x_0)^\mathrm{diag}$ is the largest diagonal quotient of
$\Pi(X,\, x_0)^\mathrm{ab}$, and there are no nontrivial homomorphisms from $U$ to $\Delta$ (use \cite[8.3, Corollary, p.65]{waterhouse}). Therefore,
we have $\Pi(X,x_0)^{\mathrm{diag}}\,\simeq\, \Delta$. The same argument shows, employing \cite[Exercise 6, p.67]{waterhouse}, that
$\Pi(X,\,x_0)^{\mathrm{ab},\mathrm{uni}}\,\simeq\, U$. 
As remarked in \cite[p. 707, Theorem 21]{ds07}, we have  isomorphisms
\[
\begin{split}
\Pi(A,\, \alpha(x_0)) &\simeq  \mathrm{Diag}(\mathrm{Pic}^0(A)\langle p\rangle)\times \pi_1^{\rm et}(A,\alpha(x_0))^{\rm uni}\\&\simeq   \mathrm{Diag}(\mathrm{Pic}^0(A)\langle p\rangle)\times T_p(A).
\end{split}
\]
Since there are no nontrivial homomorphisms from a unipotent (respectively, diagonal) affine
group scheme to a diagonal (respectively, unipotent) affine group scheme,
the homomorphism $\alpha_\#$ in the statement  is given by a pair of homomorphisms 
\[
\alpha_\#^{\mathrm{diag}} \times \alpha^{\mathrm{uni}}_\#\,:\,\Pi(X,x_0)^\mathrm{diag}\times
\Pi(X,x_0)^{\mathrm{ab},\mathrm{uni}}\,\longrightarrow \, \mathrm{Diag}(\mathrm{Pic}^0(A)
\langle p\rangle)\times T_p(A)\, .
\]
{}From Corollary \ref{20.07.2015--2}, we know that the
homomorphism $\alpha_\#^{\mathrm{diag}}$ is surjective with kernel
$\mathrm{Diag}\left(\mathrm{NS}(X)'\right)$. From 
\cite[p. 704, Corollary 16]{ds07}, we know that $\Pi(X,\,x_0)^\mathrm{ab,uni}$ is pro-etale, that is, $\Pi(X,\,x_0)^\mathrm{ab,uni}\simeq \pi_1^{\rm et}(X,\,x_0)^{\rm ab, uni}$; applying Proposition \ref{20.07.2015--1} we conclude that 
$\alpha_\#^{\mathrm{uni}}$ is surjective with kernel $K$.
\end{proof}

\section*{Acknowledgements}

The first-named author wishes to thank the Institut de Math\'ematiques de Jussieu -- 
Paris Rive Gauche for hospitality while the work was carried out.


\begin{thebibliography}{ZZZZ}
\bibitem[SGA1]{SGA1} {\it Rev\^etements \'etales et groupe fondamental. }
S\'eminaire de g\'eom\'etrie alg\'ebrique du Bois Marie 1960--61. Directed by A. Grothendieck. With two papers by M. Raynaud. Updated and annotated reprint of the 1971 original. Documents Math\'ematiques 3. Soc. Math.  France, Paris, 2003.

\bibitem[SGA6]{SGA6} {\it Th\'eorie des intersections et th\'eor\`eme de Riemann-Roch,
S\'eminaire de G\'eom\'etrie Alg\'ebrique du Bois-Marie 1966--1967 (SGA 6)},
Dirig\'e par P. Berthelot, A. Grothendieck et L. Illusie. Avec la collaboration de D.
Ferrand, J. P. Jouanolou, O. Jussila, S. Kleiman, M. Raynaud et J. P. Serre,
Lecture Notes in Mathematics, Vol. 225, Springer-Verlag, Berlin-New York, 1971.

\bibitem[An11]{An} M. Antei, On the abelian fundamental group scheme of a family
of varieties, {\it Israel Jour. Math.} {\bf 186} (2011), 427--446.

\bibitem[Gi75]{gieseker}D. Gieseker, Flat vector bundles and the fundamental group in 
non-zero characteristics, {\it Ann. Scuola Norm. Sup. Pisa} {\bf 2} (1975), 1--31.

\bibitem[J87]{jantzen} J. C. Jantzen, \emph{Representations of algebraic groups}. Academic Press, 1987. 


\bibitem[KL81]{katz_lang} N. M. Katz and S. Lang, Finiteness theorems in geometric
classfield theory, {\it Enseign. Math.} {\bf 27} (1981), 285--319.

\bibitem[Kl05]{kleiman} S. Kleiman, The Picard Scheme, in {\it Fundamental algebraic 
geometry}. Mathematical Surveys and Monographs, 123. American Mathematical Society, 
Providence, RI, 2005.

\bibitem[La12]{La} A. Langer, On the S-fundamental group scheme. II, {\it J. Inst.
Math. Jussieu} {\bf 11} (2012), 835--854.

\bibitem[Mu08]{mav}D. Mumford, {\it Abelian Varieties}, Second corrected reprint of
the second edition, Tata Institute of Fundamental Research Studies in Mathematics, 5,
Hindustan Book Agency, New Delhi, 2008.
 
\bibitem[Mi80]{milne} J. S. Milne, {\it \'Etale cohomology}, Princeton Mathematical
Series, 33, Princeton University Press, Princeton, N.J., 1980.

\bibitem[N76]{nori1} M. V. Nori, \emph{On the representations of the fundamental group}, Compositio Math. 33 (1976), no. 1, 29--41.

\bibitem[N83]{nori2} M. V. Nori, \emph{The fundamental group-scheme of an abelian variety}, Math. Ann. 263 (1983), 263--266.

\bibitem[dS07]{ds07} J. P. dos Santos, Fundamental group schemes for stratified sheaves,
{\it Jour. Alg.} {\bf 317} (2007), 691--713.

\bibitem[Se58a]{serre1}J.-P. Serre, Morphismes universels et vari\'et\'e 
d'Albanese, {\it S\'{e}minaire Claude Chevalley}, {\bf 4}, 1958--1959, 
Exp. No. 10, 22 p.

\bibitem[Se58b]{Se} J.-P. Serre, Morphismes universels et 
diff\'{e}rentielles de troisi\`{e}me esp\`{e}ce, {\it S\'{e}minaire 
Claude Chevalley}, {\bf 4}, 1958--1959, Exp. No. 11, 8 p.

\bibitem[Wa79]{waterhouse}W. C. Waterhouse, {\it Introduction to affine group schemes},
Graduate Texts in Mathematics, 66. Springer-Verlag, New York-Berlin, 1979.

\bibitem[We94]{weibel}C. A. Weibel, \emph{An introduction to homological algebra}, Cambridge studies in advances mathematics 38, 1994.

\end{thebibliography}
\end{document}